\newcommand*\patchAmsMathEnvironmentForLineno[1]{%
  \expandafter\let\csname old#1\expandafter\endcsname\csname #1\endcsname
  \expandafter\let\csname oldend#1\expandafter\endcsname\csname end#1\endcsname
  \renewenvironment{#1}%
     {\linenomath\csname old#1\endcsname}%
     {\csname oldend#1\endcsname\endlinenomath}}%
\newcommand*\patchBothAmsMathEnvironmentsForLineno[1]{%
  \patchAmsMathEnvironmentForLineno{#1}%
  \patchAmsMathEnvironmentForLineno{#1*}}%
\newtheorem{thm}{Th\'eor\`eme} \newtheorem{lem}{Lemme}
 \newtheorem{prop}{Proposition}
\theoremstyle{definition}
 \newtheorem{defn}[thm]{D\'efinition}
\newtheorem{rem}{Remarque}
 \newcommand{\C}{\mathbb C}
\newcommand{\G}{\mathbb G} 
 \newcommand{\p}{\mathbb P}
\numberwithin{equation}{section}
\begin{document}



\title{\`A propos des vari\'et\'es de Poncelet}



\author{Jean Vall\`es}
\email{{\tt jean.valles@univ-pau.fr}}
\address{Universit\'e de Pau et des Pays de l'Adour \\
  Avenue de l'Universit\'e - BP 576 - 64012 PAU Cedex - France}

\keywords{Fibr\'es de Schwarzenberger, Courbes et surfaces de Poncelet}
\subjclass[2000]{14N15, 14N20, 14L35, 14L30}

\thanks{L'auteur a \'et\'e  partiellement financ\'e par les  ANR-09-JCJC-0097-0 INTERLOW et ANR GEOLMI.}


\begin{abstract}
On d\'efinit les vari\'et\'es de Poncelet. Celles-ci g\'en\'eralisent tr\`es naturellement les c\'el\`ebres courbes de Poncelet, qui, conform\'ement \`a l'axiome d'Arnold, 
ne furent pas introduites par Poncelet mais par Darboux. 
On montre ensuite qu'une surface g\'en\'erale de degr\'e $\ge 4$ dans $\p^3$ n'est pas une surface de Poncelet mais,
par contre, que  toutes les  quadriques et toutes les  cubiques lisses de $\p^3$ sont des  surfaces de Poncelet. 
\end{abstract}
\maketitle


\section{Introduction}\label{sec1}

La petite histoire des math\'ematiques croise souvent l'histoire plus rocambolesque des livres scolaires.  Marie Stuart  fut d\'ecapit\'ee pour son go\^ut du chiffre, go\^ut qu'elle acquit \`a la cour
 du roi Charles IX aupr\`es de Vi\`ete,
Pascal d\'ecrit l'\textit{esprit g\'eom\'etrique} pour instruire les petits \'el\`eves jans\'enistes  avant que Louis XIV ne condamne Port-Royal,  et,  
plus pr\`es de nous, Poncelet, jeune lieutenant de la grande arm\'ee de Napol\'eon posa les bases de la g\'eom\'etrie projective moderne dans une prison russe
o\`u il fut retenu prisonnier apr\`es la bataille de Krasnoie, bataille rest\'ee c\'el\`ebre car l'empereur des fran\c{c}ais r\'eussit \`a passer entre les mailles serr\'ees du filet 
tendu par Koutouzov. Dans les 
\textit{M\'emoires d'outre-tombe}  Chateaubriand (cf. \cite{Ch}, page 211, tome 2) relate l'\'episode avec emphase :
``\textit{ Les hauteurs environnantes au pied desquelles marchait Napol\'eon, se chargeaient d'artillerie et pouvaient \`a chaque instant le foudroyer ; il y jette 
un coup d'\oe{}il et dit :} ``Qu'un escadron de mes chasseurs s'en empare!''\textit{ Les Russes n'avaient qu'\`a se laisser rouler en bas, leur masse l'e\^ut \'ecras\'e ; 
mais, \`a la vue de ce grand homme et des d\'ebris de la garde serr\'ee en bataillon carr\'e, ils demeur\`erent immobiles, comme fascin\'es : son regard arr\^eta cent mille hommes sur les collines.}'' 
 
Poncelet, quant \`a lui, resta sur le champ de bataille, ``[...] \textit{d'o\`u il n'est sorti vivant que par une faveur sp\'eciale de Dieu, au milieu de ses chefs, de ses camarades tu\'es ou atteints de blessures, 
toutes mortelles 
dans ce pernicieux climat}'' (cf. \cite{P}, Pr\'eface), et fut fait prisonnier le lendemain de la fuite de Napol\'eon.

Dans le manuscrit r\'edig\'e lors de son s\'ejour russe \cite{P}, il d\'emontre qu'une conique $n$-circonscrite \`a une conique lisse fix\'ee lui est infiniment circonscrite.
C'est le  th\'eor\`eme appel\'e th\'eor\`eme de cl\^oture. Quelques d\'ecennies  plus tard, Darboux a montr\'e qu'une courbe de degr\'e $n-1$ passant par les $\binom{n}{2}$ sommets  
d'un polygone dont les $n$ c\^ot\'es sont tangents \`a une conique lisse $C$ passe par une infinit\'e de sommets de tels polygones (cf. \cite{Da}, p. 248).
 C'est aussi dire que tout point de la courbe est le sommet d'une configuration 
de $n$ droites tangentes \`a $C$. Ces courbes  sont appel\'ees \textit{courbes de Poncelet}.
 \begin{figure}[h!]
    \centering
    \includegraphics[height=6.5cm]{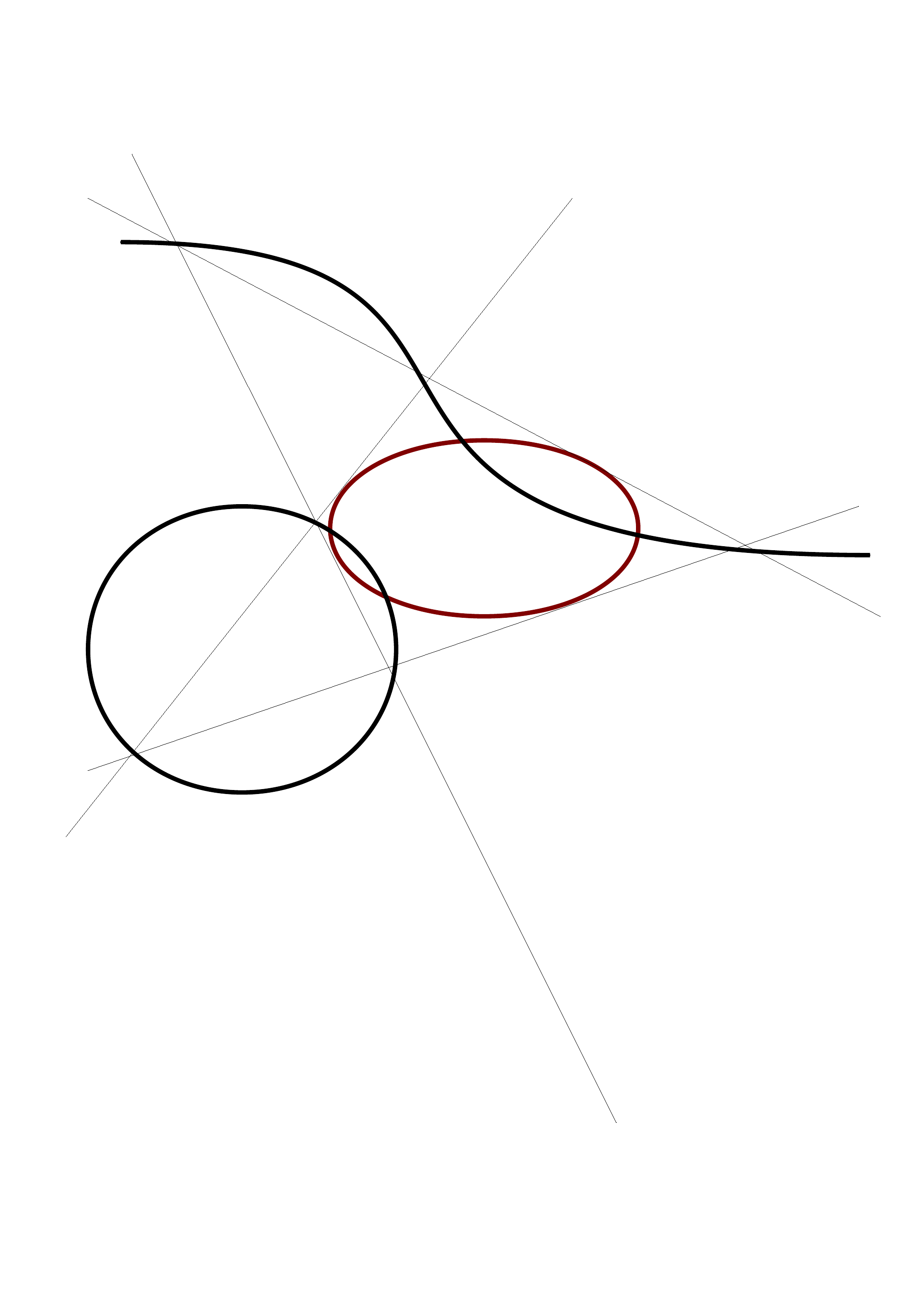}
    \caption{Un quadrilat\`ere complet circonscrit et une cubique de  Poncelet}
  \end{figure}
Pour les vari\'et\'es de dimensions plus grandes introduites par Trautmann dans \cite{T}, et que l'on red\'efinit ci-dessous comme d\'eterminant de sections d'un certain fibr\'e vectoriel 
(voir la d\'efinition \ref{sp}), 
 le th\'eor\`eme de Darboux ne s'applique plus; plus pr\'ecis\'ement,
 il ne suffit plus de passer par les sommets d'un polytope $(n-1)$  tangents \`a la courbe rationnelle normale sous-jacente pour que chacun de ses points soit le sommet d'un tel polytope. 
On v\'erifie ci-dessous  que pour qu'une hypersurface soit une hypersurface de Poncelet, il lui faudra,  passer 
par les sommets d'une infinit\'e de dimension $n-1$ de tels polytopes (voir la proposition \ref{hypPoncelet}).

L'espace projectif des courbes planes de degr\'e $n-1$ passant par les $\binom{n}{2}$ sommets de $n$ droites fix\'ees est de dimension $n-1$. La donn\'ee de $n$ droites tangentes \`a une conique lisse 
\'equivaut \`a la donn\'ee d'un diviseur de degr\'e $n$ sur $\p^1$. Sa dimension projective est $n$. Enfin l'espace des coniques du plan est de dimension projective $5$. Ainsi 
un d\'ecompte na\"{\i}f de dimension  du  sch\'ema des courbes planes de degr\'e $n-1$ qui sont de Poncelet donne $(n-1)+n+5=2n+4$.
Mais ce d\'ecompte est faux d'apr\`es  le th\'eor\`eme de Darboux. On v\'erifie en effet que cette dimension est $2n+3$, ce qui correspond au choix d'une conique (dimension $5$)
et d'un pinceau de formes binaires de degr\'e $n$ (dimension de $\G(1,n)$ \'egale $2n-2$) sur cette conique. Ainsi, contrairement aux cubiques planes et aux coniques (cf. \cite{Ba}, p. 90 et p. 85),
 la vari\'et\'e des quartiques de Poncelet
(appel\'ees  quartiques de L\"uroth) ne remplit pas l'espace des quartiques planes et forme un diviseur. 
Morley a prouv\'e que le degr\'e de la sous-vari\'et\'e des quartiques de L\"uroth \'etait \'egal \`a $54$ (cf. \cite{Mo}).
 Barth, dans son article 
fondateur sur les fibr\'es vectoriels de rang deux sur le plan (cit\'e ci-dessus), montre que cette hypersurface est l'image de l'espace de modules $\mathrm{M}_{\p^2}(0,4)$ (des faisceaux localement libres et 
semistables de premi\`ere et seconde classes de Chern \'egales \`a $0$ et $4$) par l'application qui \`a un fibr\'e associe sa courbe de droites de saut. Qui plus est, le degr\'e de l'image
est le nombre de Donaldson\footnote{Ces nombres, et plus g\'en\'eralement les invariants de Donaldson, ont \'et\'e introduits par Donaldson lui-m\^eme, en $1983$, un an 
apr\`es que la classification des vari\'et\'es topologiques compactes simplement connexes de dimension $4$ a \'et\'e \'etablie  par Freedman \cite{Fr}. Donaldson, en introduisant ces invariants, a montr\'e
 qu'il existe  des surfaces complexes lisses
hom\'eomorphes qui ne sont pas diff\'eomorphes \cite{Do}.}  $q_{13}$,  ce qui accro\^{\i}t encore   l'int\'er\^et
pour le diviseur des quartiques de L\"uroth. En effet, comme le morphisme de Barth est g\'en\'eriquement injectif,  ce degr\'e  co\"{\i}ncide avec $q_{13}$ (cf. \cite{LP} et \cite{OS}, ainsi bien-s\^ur que \cite{Mo}).

Le m\^eme probl\`eme  se pose en dimension plus grande. \`A un couple $(C_n, f_1\wedge \cdots \wedge f_n))$ form\'e d'une courbe rationnelle normale et de $n$ formes binaires de degr\'e $n+k$ 
on associe une hypersurface de Poncelet de degr\'e $k+1$ sur $\p^n$. Un d\'ecompte des dimensions attendues montre que cette application ne peut pas \^etre dominante pour $n=3$ d\`es que $k\ge 3$.
Par contre elle peut l'\^etre lorsque $n=3$ pour $k=1$ et $k=2$. Et en effet, on montre qu'elle l'est, plus pr\'ecis\'ement, on montre qu'une surface quadrique quelconque ou  une surface cubique  lisse  de $\p^3$ 
sont des  surfaces de Poncelet 
(voir les th\'eor\`emes \ref{surfacequadrique} et \ref{surfacecubique}).
\section{G\'en\'eralit\'es et notations}\label{sec2}
Soit $V$ un espace vectoriel complexe de dimension $2$. On note $S_i$ les espaces vectoriels $\mathrm{Sym}^i(V)$ (et $S_i^{\vee}=\mathrm{Sym}^i(V^{\vee})$) 
invariants sous l'action de $\mathrm{SL(V)}$. La multiplication des formes binaires :
$$ \begin{CD}
S_n^{\vee}\otimes S_k^{\vee} @>>> S_{n+k}^{\vee}
   \end{CD}$$
permet de d\'efinir deux fibr\'es, dits  de Schwarzenberger, $E_{k,n+k}$ sur $\p( S_k^{\vee} ) $ et $E_{n,n+k}$ sur $\p(S_n^{\vee} )$
ainsi que la vari\'et\'e des espaces lin\'eaires  $N$ s\'ecants (avec $N=\mathrm{inf}(k,n)+1$) \`a la courbe $C_{n+k}$, image de 
$\p(V)$ dans $\p( S_{n+k})$ par le morphisme de Veronese. Les fibr\'es projectivis\'es sont, pour l'un, l'\'eclatement de  $\p( S_{n+k})$ le long de cette 
vari\'et\'e et  pour l'autre, le mod\`ele lisse de cette vari\'et\'e de s\'ecantes, obtenu en l'\'eclatant le long de son lieu singulier.
Tout ceci est connu et r\'edig\'e en plusieurs endroits (cf. par exemple \cite{ISV}).\\

Dans l'article \cite{V} consacr\'e aux fibr\'es de Schwarzenberger,  ces courbes sont interpr\'et\'ees comme des pinceaux de sections de ces fibr\'es.
C'est cette interpr\'etation que nous reprenons ci-dessous en la g\'en\'eralisant aux dimensions plus grandes.\\

Consid\'erons le fibr\'e de Schwarzenberger suivant :
$$\begin{CD}
0 @>>> S_k\otimes \mathcal{O}_{\p(S_n^{\vee})}(-1)@>>>  S_{n+k}
\otimes  \mathcal{O}_{\p(S_n^{\vee})} @>>>
E_{n,n+k}@>>> 0.
\end{CD}$$
Nous associons, \`a un syst\`eme lin\'eaire  $\Lambda=<f_0,\cdots, f_r> \subset S_{n+k}$ de dimension $r+1\le n$,  un sous-sch\'ema de $\p(S_n^{\vee})$ d\'efini  
par les \'equations $f_0\wedge\dots\wedge f_r=0$ (l'\'ecriture est abusive mais nous l'expliciterons plus loin).
En effet, 
comme
$
\mathrm{H}^0(E_{n, n+k})\cong \mathrm{H}^0(\mathcal{O}_{\p^1}(n+k)),$
les formes binaires  $f_i$ s'interpr\`etent comme des  sections de  $E_{n,n+k}$.  Le diagramme commutatif suivant l'explique :
$$
 \begin{CD}
@.@. \Lambda\otimes\mathcal{O}_{\p(S_n^{\vee})} @ =  \Lambda\otimes\mathcal{O}_{\p(S_n^{\vee})}\\
@. @. @VVV   @ VVV\\
 0@>>> S_k\otimes \mathcal{O}_{\p(S_n^{\vee})}(-1)@>>>S_{n+k}\otimes \mathcal{O}_{\p(S_n^{\vee})}  @>>> E_{n,n+k}@>>>0\\
  @.                           @|              @ VVV         @VVV                         \\
0 @ >>> S_k\otimes \mathcal{O}_{\p(S_n^{\vee})}(-1)@>>>\frac{S_{n+k}}{\Lambda}\otimes \mathcal{O}_{\p(S_n^{\vee})}   @>>> {\mathcal L}@>>>0.
\end{CD}
$$
Le  faisceau  ${\mathcal L}$, d\'efini sur $\p(S_n^{\vee})$, cesse d'\^etre localement libre le long de la sous-vari\'et\'e ferm\'ee d'\'equation $f_0\wedge\dots\wedge f_r=0$.

\begin{defn}
\label{sp}
Les vari\'et\'es  d\'efinies par la donn\'ee de $r\le n$ sections lin\'eairement ind\'ependantes d'un fibr\'e de  Schwarzenberger  $E_{n,n+k}$
au dessus de  $\p(S_n^{\vee})$ sont appel\'ees  {\em Vari\'et\'es de Poncelet}.
\end{defn}
Une section de $E_{n,n+k}$ correspond \`a la donn\'ee de  $n+k$ points sur la courbe rationnelle normale $C_{n+k}$ et, par l'isomorphisme canonique $C_{n+k}\simeq C_n\simeq \p(S_1)$, sur la courbe rationnelle normale 
$C_{n}\subset \p(S_n)$; cette section s'annule le long des 
 $\binom{n+k}{n}$
points d'intersection de   $n$ hyperplans 
qui ont un contact d'ordre $n-1$ avec   $C_{n}$ le long de  $n$ points choisis parmi les  $n+k$ points.
Ces vari\'et\'es de Poncelet sont ainsi caract\'eris\'ees par le fait de passer par les sommets  
d'un polytope \`a  $n+k$ faces ; celles-ci sont des hyperplans tangents d'ordre $n-1$ \`a $C_n$.
Le cas des courbes est bien connu (cf. par exemple \cite{Da} et \cite{T}). En dimension sup\'erieure, hormis quelques lignes 
\`a la  fin de l'article \cite{T} et quelques exemples dans \cite{ISV}, elles n'ont pas \'et\'e \'etudi\'ees.

La proposition suivante g\'en\'eralise le th\'eor\`eme de Darboux concernant les courbes. 

\begin{prop}
\label{hypPoncelet}
 Une hypersurface de degr\'e $k+1$ est une hypersurface de Poncelet sur $\p^n$ si et seulement si elle contient une vari\'et\'e de Poncelet de codimension 
$2$  d\'efinie par 
$(n-1)$ sections lin\'eairement ind\'ependantes du fibr\'e $E_{n,n+k}$.
\end{prop}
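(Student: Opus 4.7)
Mon approche consiste à interpréter $V$ comme un lieu déterminantal et à exploiter la résolution de Hilbert-Burch qui en résulte. Posons $\Lambda = \langle f_0, \dots, f_{n-2}\rangle \subset S_{n+k}$, qui est de dimension $n-1$. Le diagramme commutatif introduit en Section \ref{sec2} fournit la suite exacte
\[
0 \longrightarrow S_k \otimes \mathcal{O}_{\p(S_n^\vee)}(-1) \xrightarrow{M} (S_{n+k}/\Lambda) \otimes \mathcal{O}_{\p(S_n^\vee)} \longrightarrow \cL \longrightarrow 0,
\]
où $M$ est une matrice $(k+2)\times(k+1)$ de formes linéaires dont le lieu de dégénérescence (codimension attendue $2$) est précisément $V$. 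Sous l'hypothèse générique que $V$ est de codimension $2$ et Cohen-Macaulay, le théorème de Hilbert-Burch identifie cette suite à la résolution de $\cI_V$, à un twist près, et livre $\cL \simeq \cI_V(k+1)$; les $k+2$ mineurs maximaux $\Delta_0, \dots, \Delta_{k+1}$ de $M$ (tous de degré $k+1$) engendrent alors l'idéal $\cI_V$.

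Le sens direct ne présente aucune difficulté: si $H$ est l'hypersurface de Poncelet associée aux $n$ sections $f_0, \dots, f_{n-1}$, alors la variété $V$ définie par les $n-1$ premières est automatiquement contenue dans $H$, car la dépendance linéaire de $n-1$ sections entraîne celle des $n$.

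Pour la réciproque, je passe aux sections globales. Les annulations $H^0(S_k\otimes\mathcal{O}(-1))=H^1(S_k\otimes\mathcal{O}(-1))=0$, valables dès que $n\ge 2$, induisent un isomorphisme canonique
\[
S_{n+k}/\Lambda \xrightarrow{\sim} H^0(\cL) \simeq H^0(\cI_V(k+1)).
\]
Concrètement, la classe $\bar f \in S_{n+k}/\Lambda$ de coordonnées $\sigma=(\sigma_0,\dots,\sigma_{k+1})$ correspond au déterminant de la matrice $(k+2)\times(k+2)$ bordée $[M\,|\,\sigma]$, à savoir $\sum_i (-1)^i \sigma_i \Delta_i$. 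Une hypersurface $H=\{F=0\}$ de degré $k+1$ contenant $V$ a donc son équation $F$ de cette forme, associée à un certain $\bar f_{n-1}$. En choisissant un relèvement $f_{n-1}\in S_{n+k}$ et en posant $\Lambda'=\Lambda+\C f_{n-1}$, un développement élémentaire de déterminant montre que $\det[M\,|\,\sigma]$ coïncide, à un scalaire non nul près, avec $\det M'$, où $M'$ est la matrice carrée $(k+1)\times(k+1)$ associée au système $\Lambda'$. Autrement dit, $H$ est l'hypersurface de Poncelet associée aux $n$ sections $f_0,\dots,f_{n-1}$.

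Le point le plus délicat sera d'établir proprement l'identification $\cL \simeq \cI_V(k+1)$ via Hilbert-Burch, et de gérer les systèmes $\Lambda$ particuliers pour lesquels $V$ pourrait dégénérer (chute de codimension ou défaut de propriété Cohen-Macaulay). Pour $\Lambda$ générique ceci est standard; dans le cas général, on argumentera par spécialisation et semi-continuité, ou en invoquant directement le complexe d'Eagon-Northcott.
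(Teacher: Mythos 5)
Votre d\'emonstration est correcte et suit pour l'essentiel la m\^eme d\'emarche que celle de l'article~: dans les deux cas, le diagramme commutatif de la section~\ref{sec2} identifie les hypersurfaces de degr\'e $k+1$ contenant $V$ aux sections de $E_{n,n+k}$ modulo $\Lambda$, la surjectivit\'e en sections globales \'etant obtenue chez vous par la ligne du bas (annulation de $\mathrm{H}^0$ et $\mathrm{H}^1$ de $S_k\otimes\mathcal{O}(-1)$) et dans l'article par la ligne du haut (surjectivit\'e de $\mathrm{H}^0(E_{n,n+k})\to \mathrm{H}^0(\mathcal{I}_X(k+1))$). Votre explicitation, via Hilbert--Burch, de l'isomorphisme $\mathcal{L}\simeq\mathcal{I}_V(k+1)$ et le d\'eveloppement du d\'eterminant bord\'e ne font que pr\'eciser ce que l'article laisse implicite.
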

\begin{proof}
Consid\'erons le diagramme commutatif suivant :
$$
 \begin{CD}
@.@. \mathcal{O}_{\p(S_n^{\vee})}^{n-1} @ =  \mathcal{O}_{\p(S_n^{\vee})}^{n-1}\\
@. @. @VVV   @ VVV\\
 0@>>> S_k\otimes \mathcal{O}_{\p(S_n^{\vee})}(-1)@>>>S_{n+k}\otimes \mathcal{O}_{\p(S_n^{\vee})}  @>>> E_{n,n+k}@>>>0\\
  @.                           @|              @ VVV         @VVV                         \\
0 @ >>> S_k\otimes \mathcal{O}_{\p(S_n^{\vee})}(-1)@>>>\frac{S_{n+k}}{\C^{n-1}} \otimes \mathcal{O}_{\p(S_n^{\vee})}   @>>> {\mathcal I}_{X}(k+1)@>>>0.
\end{CD}
$$
Le sch\'ema $X$ est de dimension $n-2$ et il est  d\'efini par $n-1$ sections ind\'ependantes de $E_{n,n+k}$.
Une section non nulle de $\mathrm{H}^0({\mathcal I}_{X}(k+1))$, c'est-\`a-dire une hypersurface de degr\'e $k+1$ passant par les sommets des 
$n-1$ polytopes d\'etermin\'es par les $n-1$ sections, provient d'une section suppl\'ementaire de $E_{n,n+k}$. En effet cela r\'esulte de la surjectivit\'e de  
l'application $$ \mathrm{H}^0(E_{n,n+k}) \rightarrow \mathrm{H}^0({\mathcal I}_{X}(k+1)).$$ 
 L'hypersurface est alors d\'efinie par $n$ sections, autrement dit, son \'equation 
est le  d\'eterminant de ces $n$ sections.
\end{proof}
\section{Surfaces de Poncelet}
Les cubiques planes sont toutes de Poncelet tandis que les quartiques planes de Poncelet ne forment qu'un diviseur. Plus g\'en\'eralement, les courbes de Poncelet de degr\'e $k$ forment une sous-vari\'et\'e de 
dimension $2(k+1)+3$ de l'espace des courbes de degr\'e $k$. Pour les hypersurfaces de Poncelet de $\p^n$  de  degr\'e $k$  la dimension attendue est 
$$\mathrm{dim}\G(k,n+k-1)+\mathrm{dim}\frac{\mathrm{PGL}(n+1)}{\mathrm{PGL}(2)}.$$ 
Ainsi   la dimension attendue des surfaces de Poncelet de degr\'e $k$ est $3k+12$. Ce qui fait $18$  pour les quadriques, $21$ pour les cubiques, $24$ pour les quartiques.
L'espace de toutes les quadriques et de dimension $9$, celui des cubiques de dimension $19$ et celui des quartiques $34$. Pour $k\ge 4$, pour des raisons \'evidentes de dimension, une surface de degr\'e $k$ 
g\'en\'erale n'est pas de Poncelet. Par contre on attend a priori  que toute surface quadrique et toute surface 
cubique soient de Poncelet (et plut\^ot deux fois qu'une). C'est ce que nous v\'erifions maintenant.

Le cas des quadriques est \'el\'ementaire.
\begin{thm}
\label{surfacequadrique}
 Toute quadrique de $\p^3$ est une quadrique de Poncelet.
\end{thm}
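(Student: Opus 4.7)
The plan is to make the determinantal description of a Poncelet quadric in $\p^3$ explicit, identify the resulting parametrisation with a Pl\"ucker embedding, and then conclude by $\mathrm{PGL}(4)$-equivariance.

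Specialising the diagram in the proof of Proposition \ref{hypPoncelet} to $n=3$, $k=1$ (with $n=3$ sections instead of $n-1$), a Poncelet quadric attached to three binary quartics $f_1,f_2,f_3 \in S_4$ has equation
$$\det\bigl[\,M(z)\;\big|\;f_1\;\big|\;f_2\;\big|\;f_3\,\bigr]=0,$$
where $M(z)$ is the $5\times 2$ catalecticant matrix of linear forms on $\p^3=\p(S_3^\vee)$ coming from the map $S_1\otimes\sO(-1)\to S_4\otimes\sO$ of the defining resolution of $E_{3,4}$. Expanding this $5\times 5$ determinant by Laplace along the three constant columns rewrites the equation as
$$\sum_{|I|=2}\epsilon_I\, p_{I^c}(f_1\wedge f_2\wedge f_3)\,M_I(z),$$
where $p_{I^c}$ is a Pl\"ucker coordinate of $\langle f_1,f_2,f_3\rangle \in \G(3,S_4)$ and $M_I(z)$ is the $2\times 2$ minor of $M(z)$ on rows indexed by $I$. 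A direct inspection shows that the ten minors $M_I(z)$ form a basis of $\mathrm{Sym}^2 S_3^\vee$, so the Poncelet map associated to the standard twisted cubic $C_3 \subset \p^3$ coincides with the Pl\"ucker embedding $\G(3,5)\hookrightarrow \p^9$ followed by a linear isomorphism onto the space of all quadrics, cutting out a six-dimensional subvariety of the nine-dimensional $\p^9$.

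The Poncelet construction is equivariant under $\mathrm{PGL}(4)$ once the twisted cubic is allowed to vary, so the set of all Poncelet quadrics in $\p^3$ is the $\mathrm{PGL}(4)$-saturation of the above copy of $\G(3,5)$. To conclude that every quadric is Poncelet it therefore suffices to exhibit one Poncelet quadric of each rank. Taking $f_1\wedge f_2\wedge f_3$ to be a coordinate Pl\"ucker vector collapses the Poncelet quadric to a single catalecticant minor, and among the ten minors one finds $z_1z_2-z_0z_3$ (rank $4$, smooth), $z_1^2-z_0z_2$ (rank $3$, cone), $z_0z_3$ (rank $2$, pair of planes) and $z_0^2$ (rank $1$, double plane), covering all four $\mathrm{PGL}(4)$-orbits of quadrics in $\p^3$.

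The only non-formal steps are the Laplace expansion of the $5\times 5$ determinant and the basis check for the ten catalecticant minors; both are elementary. The substance of the argument is the reduction to the four rank orbits, which makes the appeal to the (sizable) dimensional surplus $18 > 9$ between the Poncelet parameter space and the space of all quadrics completely effective.
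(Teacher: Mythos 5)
Your proof is correct and follows essentially the same route as the paper: both reduce to exhibiting, via the explicit $5\times 2$ catalecticant matrix $M$ of the twisted cubic, one Poncelet quadric of each rank $1,2,3,4$ among the $2\times 2$ minors of $M$, and then conclude by $\mathrm{PGL}(4)$-equivariance. Your additional observations (the Laplace expansion identifying the Poncelet map with a Pl\"ucker embedding, and the fact that the ten minors form a basis of $\mathrm{Sym}^2 S_3^{\vee}$) are accurate and make explicit what the paper leaves implicit, but they do not change the substance of the argument.
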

\begin{proof}
 Une quadrique de Poncelet est une section du fibr\'e de Schwarzenberger d\'efini par la suite exacte :
$$ \begin{CD}
 0@>>> S_1\otimes \mathcal{O}_{\p(S_3^{\vee})}(-1)@>M>>S_4\otimes \mathcal{O}_{\p(S_3^{\vee})}  @>>> E_{3,4}@>>>0.
   \end{CD}
$$
Quitte \`a noter $\p(S_3)=\mathrm{proj}(\C[x_0,x_1,x_2,x_3])$ la matrice $M$ est bien d\'ecrite :
$$ M= \left (
             \begin{array}{cc}
              x_0 &  0       \\
              x_1 & x_0    \\
              x_2 & x_1     \\
              x_3 & x_2  \\
                0  & x_3  
             \end{array}
      \right ).
$$
Les mineurs $ \left |
             \begin{array}{cc}
              x_1 & x_0    \\     
              x_3 & x_2            
             \end{array}
      \right |
$, $ \left |
             \begin{array}{cc}
              x_1 & x_0    \\     
              x_2 & x_1            
             \end{array}
      \right |
$, $ \left |
             \begin{array}{cc}
              x_1 & x_0    \\     
              0 & x_3            
             \end{array}
      \right |
$ et $ \left |
             \begin{array}{cc}
              x_0 & 0   \\     
              x_1 & x_0            
             \end{array}
      \right |
$
extraits de $M$ d\'efinissent des  quadriques de rangs respectivement \'egaux \`a $4$, $3$, $2$ et $1$. 
\end{proof}
Le cas des cubiques est un peu plus difficile.
\begin{thm}
\label{surfacecubique}
Toute surface cubique lisse de $\p^3$ est une surface de Poncelet.
\end{thm}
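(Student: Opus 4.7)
L'objectif est de montrer, pour toute surface cubique lisse $X\subset\p^3$, l'existence d'une cubique gauche $C_3\subset\p^3$ et d'un sous-espace $\Lambda\subset S_5$ de dimension $3$ tels que $X$ co\"{\i}ncide avec le lieu de d\'eg\'en\'erescence de l'application induite $\sO^3\to E_{3,5}$. Le d\'ecompte des dimensions est encourageant : les paires $(C_3,\Lambda)$ modulo $\mathrm{PGL}(2)$ forment un espace de dimension $\dim\mathrm{PGL}(4)-\dim\mathrm{PGL}(2)+\dim\mathrm{Gr}(3,S_5)=12+9=21$, \`a comparer avec $\dim|\sO_{\p^3}(3)|=19$. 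On s'attend donc \`a ce que l'application param\'etrisant les cubiques de Poncelet soit dominante sur l'espace des surfaces cubiques, avec fibres g\'en\'eriques de dimension $2$.

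La premi\`ere \'etape que j'envisage est d'\'etablir cette dominance par un calcul de diff\'erentielle en un point commode du morphisme $\Phi:(C_3,\Lambda)\mapsto \det\bigl(M(x)\,|\,N\bigr)$, o\`u $M(x)$ d\'esigne la matrice catalecticante $6\times 3$ attach\'ee \`a $C_3$ (dans l'esprit de la preuve du th\'eor\`eme \ref{surfacequadrique}) et $N$ une matrice constante $6\times 3$ codant $\Lambda$. En prenant pour $C_3$ la cubique gauche standard et pour $N$ une matrice produisant une cubique lisse tr\`es sym\'etrique, par exemple la cubique de Fermat $\sum x_i^3=0$, on peut d\'evelopper le d\'eterminant $6\times 6$ par Laplace selon les trois colonnes constantes pour obtenir $\det(M|N)=\sum_{|I|=3}\pm\det(M_I)\det(N_{I^c})$, et v\'erifier que la diff\'erentielle remplit bien l'espace $20$-dimensionnel des formes cubiques en les $x_i$.

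Une fois la dominance acquise, la seconde \'etape consiste \`a \'etendre la surjectivit\'e \`a tout le lieu lisse. Deux voies me semblent possibles. La premi\`ere passe par la propret\'e : $\mathrm{Gr}(3,S_5)$ est d\'ej\`a projective, l'espace des cubiques gauches se compactifie par le sch\'ema de Hilbert, et l'image d'un morphisme dominant issu d'une vari\'et\'e projective irr\'eductible remplit alors tout $|\sO_{\p^3}(3)|$. La seconde, plus dans l'esprit direct de l'article, consiste en une exhibition \`a la Beauville : \`a toute cubique lisse $X$ on associe un fibr\'e en droites $\cL$ sur $X$ avec $c_1(\cL)=-3K_X$, et l'on v\'erifie que sa r\'esolution minimale sur $\p^3$ prend la forme de Schwarzenberger $0\to S_2\otimes\sO_{\p^3}(-1)\oplus \sO_{\p^3}^3 \to S_5\otimes\sO_{\p^3} \to \cL\to 0$, dont se d\'eduit imm\'ediatement la paire $(C_3,\Lambda)$.

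Le point d\'elicat sera, je pense, de mener \`a bien l'une ou l'autre de ces deux constructions en dehors du lieu g\'en\'eral : pour la premi\`ere, il faut traiter le bord du sch\'ema de Hilbert des cubiques gauches, o\`u $\det(M(x)|N)$ peut s'annuler identiquement ou d\'eg\'en\'erer de fa\c{c}on non contr\^ol\'ee ; pour la seconde, il faut produire sur chaque cubique lisse $X$ un fibr\'e $\cL$ au spectre cohomologique ad\'equat, et v\'erifier un calcul de nombres de Betti garantissant que sa r\'esolution minimale prend exactement la forme catalecticante voulue. C'est vraisemblablement dans cette v\'erification cohomologique que r\'eside le v\'eritable contenu de la preuve.
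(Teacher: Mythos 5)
Votre texte est un plan de preuve plut\^ot qu'une preuve : les deux \'etapes que vous pr\'esentez comme essentielles (le calcul de diff\'erentielle \'etablissant la dominance, puis le passage de la dominance \`a toutes les cubiques lisses) sont pr\'ecis\'ement celles qui ne sont pas men\'ees \`a bien, et vous le reconnaissez vous-m\^eme en conclusion. Le d\'ecompte de dimensions ($21$ contre $19$) figure d\'ej\`a dans l'article et ne donne qu'une dimension \emph{attendue}. Surtout, votre premi\`ere voie pour conclure ne peut pas fonctionner telle quelle : l'application $(C_3,\Lambda)\mapsto\det(M\,|\,N)$ n'est qu'une application rationnelle, et apr\`es compactification et r\'esolution des ind\'eterminations son image est certes tout $|\sO_{\p^3}(3)|$, mais l'ensemble des cubiques \emph{de Poncelet} au sens de la d\'efinition \ref{sp} est seulement l'image du lieu o\`u $C_3$ est une vraie cubique gauche et o\`u le d\'eterminant ne s'annule pas identiquement ; c'est un constructible dense dont le compl\'ementaire est un ferm\'e propre qui pourrait parfaitement contenir des cubiques lisses. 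Il faudrait montrer que la fibre au-dessus de chaque cubique lisse rencontre le bon ouvert, ce qui est exactement la difficult\'e que vous rel\'eguez au ``point d\'elicat''. Votre seconde voie (pr\'esentation d\'eterminantale d'un faisceau $\cL$ de rang $1$ sur $S$) est en substance celle de l'article, mais son contenu r\'eel --- montrer que la pr\'esentation $3\times 3$ de $\cL$ se rel\`eve en une matrice persym\'etrique $6\times 3$, c'est-\`a-dire que la partie lin\'eaire de la r\'esolution est bien la multiplication $S_2\otimes\sO_{\p^3}(-1)\to S_5\otimes\sO_{\p^3}$ attach\'ee \`a une courbe rationnelle normale --- est pr\'ecis\'ement ce que vous laissez en suspens ; les nombres de Betti seuls ne suffisent pas \`a l'obtenir.

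L'article contourne enti\`erement la question de la dominance par une construction directe valable pour chaque cubique lisse $S$ : on contracte six droites deux \`a deux disjointes de $S$ pour obtenir six points $Z$ en position g\'en\'erale dans $\p^2$, de sorte que $S$ et $Z$ proviennent d'un m\^eme tenseur $3\times 3\times 4$, puis le lemme \ref{gruson} (d\^u \`a Gruson) montre que $\mathcal{I}_Z$ est r\'esolu par une matrice persym\'etrique, en r\'ealisant une quintique rationnelle \`a six n\oe{}uds passant par $Z$ comme projection de la quintique rationnelle normale $C_5\subset\p(S_5)$ depuis un centre $\p(\frac{S_5}{A})$ ; le tenseur de $S_2\otimes S_3\otimes(\frac{S_5}{A})^{\vee}$ ainsi obtenu fournit simultan\'ement la r\'esolution des six points et la pr\'esentation de $S$ comme d\'eterminant de trois sections de $E_{3,5}$. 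C'est cet argument g\'eom\'etrique, absent de votre proposition, qui constitue le c\oe{}ur de la preuve ; sans lui (ou sans un substitut effectif pour l'une de vos deux voies), l'\'enonc\'e n'est pas d\'emontr\'e.
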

\begin{rem}
Il est certainement  int\'eressant de d\'eterminer les cubiques singuli\`eres qui ne sont pas de Poncelet. Par exemple, je pense que trois plans 
poss\'edant une droite commune ne sont pas une surface de Poncelet. 
\end{rem}
\begin{proof}
Soit  $S$ une surface cubique lisse. Elle contient $27$ droites exactement et on peut en choisir $6$ droites parmi elles qui soient deux \`a deux disjointes.
En  contractant ces $6$ droites on obtient un $\p^2$ et chacune des $6$ droites a pour image un point ; de plus les $6$ points ainsi obtenus, que l'on note $Z$,  sont en position g\'en\'erale.

\smallskip

C'est classique. Les six points $Z$ (et m\^eme le double six) et la surface cubique $S$ proviennent d'un m\^eme $3$-tenseur
$\phi \in A\otimes B\otimes C$ avec $\mathrm{dim}_{\C}(A)=\mathrm{dim}_{\C}(B)=3$ et $\mathrm{dim}_{\C}(C)=4$. Sur $\p(C)=\p^3$, ce tenseur devient  une matrice  de formes lin\'eaires
$$ \begin{CD}
    A^{\vee}\otimes \mathcal{O}_{\p(C)}(-1) @>\phi>> B\otimes \mathcal{O}_{\p(C)}, 
   \end{CD}
  $$
dont le d\'eterminant est une \'equation de $S$. Sur $\p(B)=\p^2$, ce m\^eme tenseur devient une matrice de 
formes lin\'eaires
$$ \begin{CD}
    A^{\vee}\otimes \mathcal{O}_{\p(B)}(-1) @>\phi>> C\otimes \mathcal{O}_{\p(B)}, 
   \end{CD}
  $$
dont les mineurs maximaux engendrent   $\mathrm{H}^0(\mathcal{I}_Z(3))$, o\`u  $\mathcal{I}_Z$ est l'id\'eal des six points. L'\'eclatement du plan le long de $Z$,
 \`a savoir $\p(\mathcal{I}_Z(3))\subset \p(B)\times \p(C)$, s'envoie isomorphiquement sur $S$ par la seconde projection.

\smallskip

Pour montrer  que $S$ est une surface  de Poncelet, il suffit de montrer que $S$ est d\'efinie par un tenseur $\phi  \in S_2\otimes S_3\otimes  (\frac{S_{5}}{A})^{\vee}$ o\`u $A=\C^3$ est 
un sous espace vectoriel de $S_5$. Remarquons tout d'abord que si l'on consid\`ere le groupe de $6$ points $Z$ associ\'es au tenseur : 
$$ \begin{CD}                                              \\
0 @ >>> S_2\otimes \mathcal{O}_{\p(\frac{S_{5}}{A})}(-1)@>>> S_3\otimes \mathcal{O}_{\p(\frac{S_{5}}{A})}   @>>> {\mathcal I}_{Z}(3)@>>>0,
   \end{CD}
$$
 la matrice des relations est persym\'etrique (ou de Haenkel). En effet, c'est la restriction \`a $\p(\frac{S_{5}}{A})$ de la matrice persym\'etrique canonique (cf. par exemple \cite{Ha}) de dimension
$3\times 4$ sur $\p(S_5)$.

\smallskip

 R\'eciproquement une  matrice $3\times 4$ persym\'etrique de formes lin\'eaires sur $\p^2$ est toujours la restriction de la matrice persym\'etrique canonique sur $\p^5$ (puisque d\`es que les 
nombres de lignes et de colonnes sont fix\'es, il n'existe qu'une seule matrice persym\'etrique g\'en\'erique). Or les mineurs $2\times 2$ de cette matrice persym\'etrique g\'en\'erique
d\'efinissent une  quintique rationnelle normale $C_5\subset \p^5$, et via 
 l'association d\'ecrite ci-dessus,  une surface cubique de Poncelet. Le th\'eor\`eme sera donc prouv\'e lorsque  le lemme suivant le sera.
\begin{lem}
\label{gruson}
 Six points en position lin\'eaire g\'en\'erale sont r\'esolus par une matrice persym\'etrique.
\end{lem}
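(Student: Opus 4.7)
\medskip

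Plan de preuve. Je voudrais combiner le théorème de Hilbert-Burch, un comptage de dimensions et un argument de densité/propreté.

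Tout d'abord, pour un sous-schéma $Z\subset\p^2$ constitué de six points en position linéaire générale, le théorème de Hilbert-Burch fournit la résolution libre minimale
$$0\to \mathcal{O}_{\p^2}(-4)^{3}\xrightarrow{N} \mathcal{O}_{\p^2}(-3)^{4}\to \mathcal{I}_Z\to 0,$$
où $N$ est une matrice $4\times 3$ de formes linéaires dont les mineurs maximaux engendrent $\HH^0(\mathcal{I}_Z(3))$. L'énoncé du lemme équivaut alors à exhiber un sous-espace $A\subset S_5$ de codimension $3$, ainsi que des bases adéquates des facteurs libres et de $\HH^0(\mathcal{O}_{\p^2}(1))$, telles que $N$ coïncide avec la restriction à $\p(S_5/A)\simeq\p^2$ de la matrice persymétrique canonique sur $\p(S_5)$ décrite plus haut.

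Pour cela, je considère l'application rationnelle
$$\Psi: \mathrm{Grass}(3,S_5)\dashrightarrow \Hilb^6(\p^2)/\mathrm{PGL}_3$$
qui à $A$ associe le schéma de longueur six défini par les mineurs $3\times 3$ de la matrice persymétrique restreinte à $\p(S_5/A)$. La source est de dimension $9$; modulo l'action naturelle de $\mathrm{PGL}_2$ sur $S_5$ préservant la structure persymétrique, l'espace effectif des paramètres est de dimension $6$. La cible est de dimension $12-8=4$. La fibre générique de $\Psi$ est donc de dimension $2$, ce qui s'interprète classiquement comme le choix d'un double-six sur la surface cubique $S$ associée à $Z$. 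Un calcul explicite (en prenant par exemple $A=\langle e_0+e_5,\, e_1,\, e_4\rangle$ dans la base monomiale $(e_0,\dots,e_5)$ de $S_5$, calcul du type de celui effectué pour la quadrique dans le théorème \ref{surfacequadrique}) montre que $\Psi$ envoie au moins un $A$ sur un groupe de six points en position linéaire générale. Par irréductibilité de la source et le théorème de Chevalley, $\Psi$ est donc dominante et son image contient un ouvert dense $U$ du lieu en position linéaire générale.

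L'étape délicate est le passage de cet énoncé générique à l'énoncé uniforme du lemme. Pour la franchir, on exploite la propreté de la grassmannienne: en compactifiant convenablement le problème (en considérant par exemple la famille universelle des $(Z,A)$ au-dessus de $\Hilb^6(\p^2)$ avec $A$ parcourant la grassmannienne), on obtient que le lieu des $Z$ admettant une présentation persymétrique est à la fois ouvert (par dominance et constructibilité) et fermé (par propreté et semi-continuité de la longueur des fibres), donc coïncide avec toute composante irréductible du lieu en position linéaire générale. L'obstacle principal est précisément cette dernière vérification: il faut s'assurer qu'aucune dégénérescence ne vient faire chuter la longueur ou introduire de composantes parasites lorsqu'on spécialise une configuration persymétrique générique vers un $Z$ arbitraire en position linéaire générale, point qui ramène à l'étude des bords de la famille persymétrique.
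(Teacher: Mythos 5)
Votre strat\'egie (Hilbert--Burch, comptage de dimensions, puis dominance et sp\'ecialisation) est r\'eellement diff\'erente de celle du texte, mais elle laisse deux lacunes, dont une que vous signalez vous-m\^eme et qui est r\'edhibitoire en l'\'etat. D'abord, l'irr\'eductibilit\'e de la source et le fait que $\Psi$ atteigne \emph{un} point du lieu en position g\'en\'erale n'entra\^{\i}nent pas la dominance : l'image est un constructible irr\'eductible qui pourrait \^etre de dimension $<4$ tout en rencontrant l'ouvert de position g\'en\'erale ; il faudrait v\'erifier sur votre exemple explicite que la fibre y est de dimension $\le 2$ (ou que la diff\'erentielle y est surjective), ce que vous ne faites pas. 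Ensuite et surtout, m\^eme une fois la dominance acquise, vous n'obtenez l'\'enonc\'e que pour six points \emph{g\'en\'eriques}, alors que le lemme doit s'appliquer aux six points obtenus en contractant six droites disjointes d'une cubique lisse \emph{arbitraire} --- sans quoi le th\'eor\`eme \ref{surfacecubique} ne vaudrait que pour la cubique g\'en\'erale. L'argument d'ouvert-ferm\'e que vous esquissez ne se referme pas tel quel : la famille d'incidence des couples $(Z,A)$ n'est pas propre au-dessus de $\Hilb^6(\p^2)$, car en sp\'ecialisant $A$ dans la grassmannienne (qui, elle, est propre) le plan $\p(S_5/A)$ peut rencontrer $C_5$ ou devenir tangent \`a sa vari\'et\'e des s\'ecantes, auquel cas les mineurs maximaux de la matrice persym\'etrique restreinte ne d\'ecoupent plus un sch\'ema de longueur $6$ ; c'est pr\'ecis\'ement le point que vous laissez en suspens. (Accessoirement, la fibre de dimension $2$ ne s'interpr\`ete pas comme le choix d'un double-six, donn\'ee discr\`ete : elle correspond au choix d'une quintique dans un syst\`eme lin\'eaire de dimension $2$.)

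La d\'emonstration du texte \'evite enti\`erement cette difficult\'e par une construction directe et uniforme : pour \emph{tout} $Z$ en position g\'en\'erale, le syst\`eme des quintiques passant doublement par $Z$ est de dimension projective $2$, un membre g\'en\'eral en est une quintique rationnelle nodale $\Gamma_5$, donc la projection de $C_5\subset\p(S_5)$ depuis un centre d\'etermin\'e par un sous-espace $A\subset S_5$ de dimension $3$, et la multiplication $S_2\otimes S_3\to S_5/A$ fournit la r\'esolution persym\'etrique cherch\'ee. Pour sauver votre approche, le plus court serait de remplacer l'argument de propret\'e par cette construction ponctuelle, qui exhibe un \'el\'ement de $\Psi^{-1}(Z)$ pour chaque $Z$ en position g\'en\'erale --- mais c'est alors exactement la preuve du texte.
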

\begin{proof}[D\'emonstration du lemme]
Consid\'erons six points en position g\'en\'erale sur le plan. Le syst\`eme lin\'eaire des courbes de degr\'e $5$ passant doublement par chacun des points
 a dimension projective $2$ ($(21-6\times 3)-1$). Une courbe g\'en\'erale $\Gamma_5$ de ce syst\`eme est lisse hors des points bases et elle a genre $0$. Elle est donc la projection
d'une quintique rationnelle lisse de $C_5\subset \p^5$ sur la quintique $\Gamma_5\subset \p^2=\p(A^{\vee})$.   En notant $S_i$
l'ensemble des formes binaires de degr\'e $i$ sur $C_5\simeq \p^1$, on a $C_5\subset \p(S_5)$ et il existe un sous espace vectoriel 
$A\subset S_5$ de dimension $3$ qui d\'efinit le centre de projection. 
Plus pr\'ecis\'ement,  la suite exacte suivante d'espaces vectoriels :
$$ \begin{CD}                                              \\
0 @ >>> A @>>> S_5 @>>> \frac{S_{5}}{A} @>>>0,
   \end{CD}
$$
induit la morphisme de projection : $\pi  : \p(S_5)\setminus \p(\frac{S_{5}}{A})\rightarrow \p(A^{\vee})$.
 La projection restreinte $\pi_{|C_5} : C_5 \longrightarrow \Gamma_5$ est ramifi\'ee au-dessus des six points doubles de $\Gamma_5$.
(Les $6$ points doubles de la projection sont les images des $6$  points d'intersection de $\p(\frac{S_{5}}{A})$ avec la vari\'et\'e des droites bis\'ecantes \`a $C_5$.) Les points doubles de la
quintique $\Gamma_5$ sont d\'efinis par le lieu de d\'eg\'en\'erescence de la matrice
de multiplication (par un \'el\'ement variable de $S_2$) de $S_3$ dans $\frac{S_{5}}{A}$, ou
encore un tenseur dans $S_2\otimes  S_3 \otimes (\frac{S_{5}}{A})^{\vee}$. Il est persym\'etrique.
\end{proof}
\textit{Reprise de la d\'emonstration du th\'eor\`eme.}
Le tenseur  $S_2\otimes  S_3 \otimes (\frac{S_{5}}{A})^{\vee}$ d\'efinit aussi la surface cubique $S\subset \p(S_3)$ (plus exactement un faisceau ${\mathcal L}_{S}$ de rang $1$ 
 sur $S$) associ\'ee aux six points, ainsi que le fibr\'e de Schwarzenberger dont elle est le d\'eterminant de trois sections ind\'ependantes,
comme le montre le diagramme commutatif  suivant :
$$ \begin{CD}
@.@. A\otimes\mathcal{O}_{\p(S_3^{\vee})} @ =  A\otimes\mathcal{O}_{\p(S_3^{\vee})}\\
@. @. @VVV   @ VVV\\
 0@>>> S_2\otimes \mathcal{O}_{\p(S_3^{\vee})}(-1)@>>>S_5\otimes \mathcal{O}_{\p(S_3^{\vee})}  @>>> E_{3,5}@>>>0\\
  @.                           @V=VV              @ VVV         @VVV                         \\
0 @ >>> S_2\otimes \mathcal{O}_{\p(S_3^{\vee})}(-1)@>>>\frac{S_{5}}{A} \otimes \mathcal{O}_{\p(S_3^{\vee})}   @>>> {\mathcal L}_{S}@>>>0.
   \end{CD}
$$
\end{proof}
\begin{rem} 
Pour \'etablir ce dernier th\'eor\`eme, on peut aussi  montrer  que la surface cubique $S$ contient une courbe de Poncelet de genre $3$ et de degr\'e $6$.
Cette courbe est le plongement dans $\p^3$ d'une quartique plane lisse ; est-elle une quartique sp\'eciale, une quartique de Poncelet par exemple ?
\end{rem}


 Je remercie le rapporteur anonyme pour cette derni\`ere remarque et Laurent Gruson qui m'a donn\'e la preuve du lemme \ref{gruson}. 


%
  
\bibliographystyle{acmurl}
\bibliography{bibliography}

\def\cprime{$'$} \def\cprime{$'$} \def\cprime{$'$} \def\cprime{$'$}
  \def\cprime{$'$} \def\cprime{$'$}
\begin{thebibliography}{10}

\bibitem{Ba}
{\sc Barth, W.}
\newblock Moduli of vector bundles on the projective plane.
\newblock {\em Invent. Math. 42\/} (1977), 63--91.

\bibitem{Ch}
{\sc Chateaubriand, F.~R.}
\newblock M{\'e}moires d'outre tombe.
\newblock {\em Livre de poche\/}.

\bibitem{Da}
{\sc Darboux, G.}
\newblock {\em Le\c cons sur la th\'eorie g\'en\'erale des surfaces. {III},
  {IV}}.
\newblock Les Grands Classiques Gauthier-Villars. [Gauthier-Villars Great
  Classics]. \'Editions Jacques Gabay, Sceaux, 1993.

\bibitem{Do}
{\sc Donaldson, S.~K.}
\newblock An application of gauge theory to four-dimensional topology.
\newblock {\em J. Differential Geom. 18}, 2 (1983), 279--315.

\bibitem{Fr}
{\sc Freedman, M.~H.}
\newblock The topology of four-dimensional manifolds.
\newblock {\em J. Differential Geom. 17}, 3 (1982), 357--453.

\bibitem{Ha}
{\sc Harris, J.}
\newblock {\em Algebraic geometry}, vol.~133 of {\em Graduate Texts in
  Mathematics}.
\newblock Springer-Verlag, New York, 1995.
\newblock A first course, Corrected reprint of the 1992 original.

\bibitem{ISV}
{\sc Ilardi, G., Supino, P., and Vall{\`e}s, J.}
\newblock Geometry of syzygies via {P}oncelet varieties.
\newblock {\em Boll. Unione Mat. Ital. (9) 2}, 3 (2009), 579--589.

\bibitem{LP}
{\sc Le~Potier, J., and Tikhomirov, A.}
\newblock Sur le morphisme de {B}arth.
\newblock {\em Ann. Sci. \'Ecole Norm. Sup. (4) 34}, 4 (2001), 573--629.

\bibitem{Mo}
{\sc Morley, F.}
\newblock On the {L}uroth {Q}uartic {C}urve.
\newblock {\em Amer. J. Math. 41}, 4 (1919), 279--282.

\bibitem{OS}
{\sc Ottaviani, G., and Sernesi, E.}
\newblock On the hypersurface of {L}\"uroth quartics.
\newblock {\em Michigan Math. J. 59}, 2 (2010), 365--394.

\bibitem{P}
{\sc Poncelet, J.-V.}
\newblock {\em Trait\'e des propri\'et\'es projectives des figures. {T}ome
  {I}}.
\newblock Les Grands Classiques Gauthier-Villars. [Gauthier-Villars Great
  Classics]. \'Editions Jacques Gabay, Sceaux, 1995.
\newblock Reprint of the second (1865) edition.

\bibitem{T}
{\sc Trautmann, G.}
\newblock Poncelet curves and associated theta characteristics.
\newblock {\em Exposition. Math. 6}, 1 (1988), 29--64.

\bibitem{V}
{\sc Vall{\`e}s, J.}
\newblock Fibr\'es de {S}chwarzenberger et coniques de droites sauteuses.
\newblock {\em Bull. Soc. Math. France 128}, 3 (2000), 433--449.

\end{thebibliography}
   





%



%
%

\end{document}